\documentclass[reqno]{amsart}

\usepackage{amsfonts,amsthm,amsmath,amssymb,amscd,epsf,latexsym,mathrsfs}
\usepackage{graphicx}

\newtheorem{theorem}{Theorem}[section]
\newtheorem{proposition}[theorem]{Proposition}
\newtheorem{lemma}[theorem]{Lemma}
\newtheorem{corollary}[theorem]{Corollary}

\theoremstyle{definition}
\newtheorem{definition}[theorem]{Definition}

\newtheorem{remark}[theorem]{Remark}
\newtheorem{problem}[theorem]{Problem}

\newcommand {\ga}{\gamma}
\newcommand {\Ga}{\Gamma}

\newcommand {\bCP} {\mathbb {CP}}

\newcommand{\Si}{\Sigma}

\newcommand \RG {\mathcal {RG}}
\newcommand \crs {\operatorname{cr}}

\begin{document}
\numberwithin{equation}{section}

\title[Ribbon graphs and meromorphic functions]{Ribbon graphs and meromorphic functions}

\author[B.~Shapiro]{Boris Shapiro}
\address{Department of Mathematics, Stockholm University, SE-106 91
Stockholm, Sweden}
\email{shapiro@math.su.se}

\dedicatory {``The only things worthy of our attention are those which introduce order
\newline in the complexity of the world and so make it accessible to us." H.~Poincar\'e}
\date{}
\keywords{Ribbon graph, Riemann surface, meromorphic function, graph immersion, self-intersection}
\subjclass[2020]{05C10, 14H15, 30F10, 57K10}

\begin{abstract}
Let $Y$ be a compact Riemann surface, $\phi:Y\to \bCP^1$ a meromorphic function, and $\Ga\subset Y$ a ribbon graph avoiding the critical points of $\phi$. Then $\phi(\Ga)$ is an immersed graph in $\bCP^1$.

Conversely, given an immersion $im:\Theta\to \bCP^1$ of an abstract multigraph $\Theta$ without vertices of valence $1$ or $2$, we describe a construction of a compact Riemann surface $Y$ and a meromorphic function $\phi_{im}:Y\to \bCP^1$   such that $\phi_{im}(\Ga)=im(\Theta)$.

We further investigate the relation between the topology of $Y$ and the combinatorics of $\Ga$. In particular, for a surface of genus $g$ we construct spanning ribbon graphs whose underlying abstract graphs have arbitrary prescribed graph genus $g'\le g$, including the planar case. As a consequence, the number of self-intersections of $\phi(\Ga)$ cannot, in general, be controlled solely by the genus of $Y$. We establish general lower bounds for the number of self-intersections and formulate several open problems, with emphasis on planar ribbon graphs.
\end{abstract}

\maketitle

\section{Introduction}

In what follows, all abstract multigraphs under consideration are assumed to have no vertices of valence $1$ or $2$.

\begin{definition}
A \emph{ribbon graph} is an abstract multigraph $\Gamma$, possibly with loops and multiple edges, endowed with a cyclic order on the half-edges incident to each vertex.
\end{definition}

Ribbon graphs provide a combinatorial model for studying surfaces and their embeddings. Given an oriented compact Riemann surface $Y$ and an embedded multigraph $\Gamma \subset Y$, the orientation of $Y$ induces a natural ribbon structure on $\Gamma$ via the counterclockwise cyclic ordering at each vertex. Conversely, any abstract ribbon graph $\Gamma$ canonically determines a closed oriented surface $Y_\Gamma$ obtained by thickening $\Gamma$ according to its cyclic structure and capping the resulting boundary components by disks. The genus of $Y_\Gamma$ is called the \emph{genus} of $\Gamma$.

We say that an embedded multigraph $\Gamma \subset Y$ \emph{spans} $Y$ if the complement $Y \setminus \Gamma$ is a disjoint union of topological disks. Spanning ribbon graphs encode the topology of the ambient surface in a particularly efficient way.

In this paper, we investigate the interaction between ribbon graphs and meromorphic functions. Given a compact Riemann surface $Y$ and a meromorphic function $\phi : Y \to \mathbb{CP}^1$, any ribbon graph $\Gamma \subset Y$ avoiding the critical points of $\phi$ gives rise to an immersed graph $\phi(\Gamma) \subset \mathbb{CP}^1$. This construction naturally leads to the problem of understanding which immersed graphs in the sphere arise in this way, and how their combinatorics reflect the topology of $Y$.

Conversely, starting from an immersion $im:\Theta \to \mathbb{CP}^1$ of an abstract multigraph $\Theta$ without vertices of valence $1$ or $2$, one can ask whether this data can be lifted to a triple $(Y, \phi, \Gamma)$ consisting of a compact Riemann surface, a meromorphic function, and a ribbon graph whose image coincides with $im(\Theta)$. One of the main goals of this paper is to establish such a correspondence and to analyze its properties.

A central theme is the relationship between the topology of the surface and the combinatorics of the underlying abstract graph. As shown in Section~\ref{sec:res}, this relationship is highly flexible: for a surface of genus $g$, there exist spanning ribbon graphs whose underlying abstract graphs have any prescribed graph genus $g' \leq g$, and in particular can be chosen to be planar (Proposition~\ref{pr:planar}). This demonstrates that the genus of the surface is not rigidly encoded in the combinatorics of the graph.

This flexibility has important consequences for the study of immersions $\phi(\Gamma) \subset \mathbb{CP}^1$. In particular, it explains why naive expectations relating the number of self-intersections of $\phi(\Gamma)$ to the genus of $Y$ fail in general. Instead, one is led to seek more subtle invariants and lower bounds for the number of self-intersections, taking into account both the ribbon structure and the analytic properties of $\phi$.

The paper is organized as follows. In Section~\ref{sec:res}, we establish the basic existence results for ribbon graphs spanning surfaces with prescribed combinatorial properties and develop the main constructions relating immersed graphs in $\mathbb{CP}^1$ to meromorphic functions. We then investigate lower bounds for the number of self-intersections of such immersions and discuss several open problems, particularly in the case of planar ribbon graphs.

The results of this note are intended as a first step toward a systematic study of immersed ribbon graphs arising from meromorphic functions.

\section{Results and proofs}\label{sec:res}

Our first result concerns the possible underlying abstract graphs of ribbon graphs spanning a given surface.

\begin{proposition}\label{pr:planar}
Given a compact Riemann surface $Y$ of genus $g$, there exist ribbon graphs spanning $Y$ whose underlying abstract graphs have any prescribed graph genus $g'\le g$. In particular, there exist ribbon graphs spanning $Y$ whose underlying abstract graphs are planar. Additionally, one may choose such graphs to be trivalent.
\end{proposition}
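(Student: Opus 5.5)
The plan is to build explicit spanning graphs, starting from the standard one-vertex spanning graph of $Y$ and modifying it. The relevant notion is the \emph{graph genus} of an abstract graph: the minimal genus of a closed orientable surface into which the graph embeds. Since $\Gamma$ embeds in $Y$, its graph genus is automatically at most $g$. The task is therefore to realize every value $g'\le g$, and the crucial case is $g'=0$.

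\emph{Step 1 (planar spanning graph).} Represent $Y$ as a $4g$-gon $P$ with the standard side identification $a_1b_1a_1^{-1}b_1^{-1}\cdots a_gb_ga_g^{-1}b_g^{-1}$. The image of $\partial P$ is a bouquet $B$ of $2g$ loops at one vertex. It spans $Y$, but it is not planar in the ribbon sense: its graph genus equals $0$, whereas the genus of $Y_B$ is $g$. However, $B$ has a vertex of valence $4g$ and uses loops, so the paper's standing conventions are harmless here.

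To get a trivalent graph, I would replace the vertex by a small embedded tree, i.e.\ blow it up into a trivalent tree in a disk neighborhood. Every loop and every multiple edge in the result is planar as an abstract graph, so the underlying abstract graph is a trivalent planar graph. The complement is still one disk, since the thickened vertex merely enlarges the $4g$-gon. If we want a simple graph, we subdivide each loop by adding extra vertices; valence 2 is then removed by attaching chords inside the face, which keeps the complement a union of disks.

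The underlying abstract graph here is a "cactus-like" graph: a tree with loops attached at trivalent nodes, or with the loops subdivided. Every such graph is planar, because blocks that are cycles or edges are planar, and a graph whose blocks are all planar is planar. This settles $g'=0$.

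\emph{Step 2 (arbitrary $g'$).} Inside one handle, add edges so that the abstract graph contains a subgraph of graph genus $1$, for example $K_{3,3}$ or $K_5$ drawn within a torus summand. I would realize this by drawing, in the face of the Step 1 graph, a $K_{3,3}$ that uses the handle $a_i,b_i$. Concretely, take the three edges crossing the handle's meridian and longitude and connect them across the cut identifications.

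Repeating this in $g'$ distinct handles gives a disjoint union, or a one-point union, of $g'$ copies of $K_{3,3}$. By the additivity of graph genus over blocks (Battle–Harary–Kodama–Youngs), the graph genus is exactly $g'$. Since each added edge lies inside a disk face and splits it, all faces remain disks and the graph still spans $Y$. The graph stays trivalent if the attachment points are new vertices on existing edges, since these become trivalent.

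\emph{Main obstacle.} The delicate step is Step 2: guaranteeing that the additions embedded in $Y$ genuinely raise the graph genus by exactly one per handle. The additions must not accidentally create extra nonplanar minors linking different handles. I would handle this by keeping each $K_{3,3}$ block connected to the rest only through a single cut vertex, so that the block decomposition, and hence the additivity theorem, applies directly. If embedding $K_{3,3}$ as a block with one cut vertex is awkward in the trivalent setting, I would instead use the lower bound from Euler's formula for minimal embeddings: a block needing a torus forces genus $\ge 1$ by Kuratowski. This is combined with the upper bound coming from an explicit embedding in a genus $g'$ subsurface.
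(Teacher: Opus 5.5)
Your proposal has two genuine gaps: the trivalent blow-up in Step 1 can fail, and Step 2 has no upper bound on the graph genus.

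\textbf{Step 1.} Label the $4g$ half-edges at the bouquet vertex $1,\dots,4g$ in cyclic order. For the standard presentation the loops $a_i,b_i$ pair them as $\{4i-3,4i-1\}$ and $\{4i-2,4i\}$. The two ends of a loop are therefore never adjacent, so no loop survives the blow-up as a loop: each becomes an edge between two distinct tree vertices. The result is a tree plus $2g$ chords, not a cactus. Already for $g=1$ it is the theta graph, a block that is not a cycle. So ``all blocks are cycles or edges'' is false, and planarity genuinely depends on the chosen tree.

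Concretely, for $g=2$ take the path with consecutive vertices $E,D,T,A,C,B$. Put half-edges $1,2$ at $E$, $3$ at $A$ above the path, $4,5$ at $B$, and $6,7,8$ at $C,T,D$ below the path. This respects the cyclic order and still leaves a single disk as complement. The loops become the edges $EA,EB,TB,CD$, and together with the path this is exactly $K_{3,3}$, with parts $\{E,T,C\}$ and $\{D,A,B\}$.

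So you must specify the expansion and check planarity. For instance, the paper's expansion of the vertex into a $4g$-gon gives a $4g$-cycle with chords $\{4i-3,4i-1\}$, which can all be drawn inside, and chords $\{4i-2,4i\}$, which can all be drawn outside; hence it is planar. Your optional passage to a simple graph by adding chords in the face has the same uncontrolled-planarity problem, and it is unnecessary since multigraphs are allowed.

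\textbf{Step 2.} This does not work as described. The Step 1 graph spans $Y$, so its only face is an open disk, and anything you draw there lies in a disk. A $K_{3,3}$ that ``uses the handle'' must therefore run along old edges winding around that handle. It then shares edges with the Step 1 graph, so it is not a block attached through a single cut vertex. The block decomposition on which your appeal to Battle--Harary--Kodama--Youngs relies is thus not under control.

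Above all, the upper bound is missing. The final graph still has edges through all $g$ handles, so there is no ``explicit embedding in a genus-$g'$ subsurface''. Your Kuratowski/Euler fallback only gives genus $\ge 1$.

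\textbf{Repair.} The block idea works if you build the graph block by block rather than adding to the bouquet:
\begin{enumerate}
\item Cut $Y$ into a sphere with $g$ holes and $g$ one-holed tori.
\item In $g'$ of the tori, place a cellularly embedded toroidal $K_{3,3}$ (three disk faces, with the hole inside one of them).
\item In the remaining $g-g'$ tori, place a one-faced theta graph.
\item Join the $g$ pieces by a trivalent tree in the planar part, each piece attached at a single interior point of one of its edges.
\end{enumerate}
The complement is a union of disks and every vertex is trivalent. The blocks are $g'$ subdivided copies of $K_{3,3}$, subdivided theta graphs, and bridges, so additivity gives graph genus exactly $g'$, including $g'=0$.

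This is a genuinely different route from the paper's. The paper wedges a genus-$g'$ graph $H$ with a bouquet of $2(g-g')$ loops, adds edges to cut faces into disks, and then expands vertices into polygons. The blockwise version has the advantage that the genus count reduces directly to additivity over blocks.
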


\begin{proof}[Proof of Proposition~\ref{pr:planar}]
We first prove the planar case $g'=0$. Consider the standard polygonal presentation of an orientable surface of genus $g$ by a $4g$-gon with side identifications
\[
a_1b_1a_1^{-1}b_1^{-1}\cdots a_gb_ga_g^{-1}b_g^{-1}.
\]
Its $1$-skeleton descends to an embedded bouquet $\Xi_g$ of $2g$ loops based at one vertex. The complement of $\Xi_g$ is a single open disk, so $\Xi_g$ spans $Y$. The underlying abstract graph of $\Xi_g$ is a bouquet of circles and hence planar.

Now fix an integer $g'\le g$. Choose a connected abstract graph $H$ of orientable graph genus $g'$ and fix a cellular embedding of $H$ in a closed orientable surface $\Sigma_{g'}$ of genus $g'$. Let $v$ be a vertex of $H$. Form a new abstract graph $G$ by wedging to $H$ at $v$ a bouquet of $2(g-g')$ loops. Any embedding of $G$ restricts to an embedding of $H$, hence the orientable graph genus of $G$ is at least $g'$. On the other hand, one may embed all added loops inside a small disk neighbourhood of $v$ in the given embedding of $H$, so $G$ also embeds in $\Sigma_{g'}$. Therefore the orientable graph genus of $G$ is exactly $g'$.

Embed $G$ in $Y$ as follows. Realize the chosen embedding of $H$ in a genus-$g'$ subsurface of $Y$, and route each of the $2(g-g')$ added loops through one of the remaining $g-g'$ handles of $Y$ in the standard way coming from the polygonal model above. At this stage the complement need not yet be a union of disks. However, every complementary component is an orientable surface with boundary, and by adding finitely many edges inside each such component one can cut it into disks. These extra edges are attached inside complementary disks or annuli and therefore can be chosen so that the resulting abstract graph still contains $H$ as a subgraph and still embeds in $\Sigma_{g'}$; hence its orientable graph genus remains equal to $g'$. This yields a spanning ribbon graph on $Y$ whose underlying abstract graph has genus $g'$.

Finally, one may make the graph trivalent by the standard local expansion of each vertex of valence $k>3$ into a small $k$-gon, attaching the incident half-edges in the cyclic order determined by the embedding. This operation preserves the ambient surface and the property that the complement is a union of disks. It also preserves the orientable graph genus of the underlying abstract graph: the expanded graph contains the original graph as a minor, so its genus is at least the original one, while collapsing each inserted polygon shows that it embeds in the same surface as before. Hence one can choose the above spanning graphs to be trivalent as well.
\end{proof}

For a given ribbon graph $\Ga\subset Y$, denote by $[\Ga]\ni \Ga$ the orbit of $\Ga$ under the action of the group $\mathrm{Homeo}_0(Y)$ of all homeomorphisms of $Y$ homotopic to the identity. We call $[\Ga]$ the \emph{class} of $\Ga$.

Given a compact orientable Riemann surface $Y$, denote by $\RG_Y$ the set of all ribbon graphs spanning $Y$, i.e.
\[
\RG_Y=\{\Ga\subset Y\mid Y\setminus \Ga \text{ is a finite union of open disks}\}.
\]
Denote by $[\RG_Y]$ the set of classes of ribbon graphs on $Y$.

\begin{remark}
It is easy to show that the minimal elements of $[\RG_Y]$, i.e.
those classes represented by ribbon graphs such that removal of any edge destroys the spanning property, are exactly those represented by graphs $\Ga\subset Y$ satisfying $Y\setminus \Ga$ is a single disk.
\end{remark}

Below we consider two complementary settings, the \emph{direct} and the \emph{inverse} one.

\
\subsection{Direct set-up.}
Consider a compact oriented Riemann surface $Y$ of genus $g$, a meromorphic function $\phi:Y\to \bCP^1$, and a class $[\Ga]$ of ribbon graphs on $Y$. For any $\widetilde \Ga\in[\Ga]$, consider its image $\phi(\widetilde \Ga)\subset \bCP^1$. We say that $\widetilde \Ga$ is \emph{generic} if

\smallskip
\noindent
(i) $\phi(\widetilde \Ga)$ does not contain critical values of $\phi$;

\smallskip
\noindent
(ii) the restriction of $\phi$ to each open edge is an immersion and all singularities of the image are transverse double points coming from two points lying in open edges.

\begin{lemma}\label{lm:generic}
Every class $[\Gamma]$ admits a generic representative.
\end{lemma}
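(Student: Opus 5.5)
We argue by a sequence of small isotopies of $Y$. Each one is a homeomorphism homotopic to the identity, so we never leave the class $[\Gamma]$. Let $C\subset Y$ be the finite set of critical points of $\phi$ and $V=\phi(C)\subset\bCP^1$ the finite set of critical values. First replace $\Gamma$ by an isotopic graph whose edges are smooth (piecewise-smooth with finitely many corners suffices at first). Then $\phi^{-1}(V)$ is a finite subset of $Y$. Condition (i) says that $\Gamma$ avoids the finite set $\phi^{-1}(V)$. Since $\Gamma$ is a $1$-dimensional complex in a surface, a small isotopy supported near these finitely many points pushes $\Gamma$ off them. In particular $\Gamma$ then avoids $C$, so $\phi$ is a local diffeomorphism on a neighbourhood $U$ of $\Gamma$.

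Next we achieve (ii). On $U$ the map $\phi$ is a local diffeomorphism, so each smooth embedded open edge maps to an immersed arc; any corners can be smoothed by an isotopy. The remaining task is the multi-local transversality condition. We apply the standard multijet transversality theorem (Thom) to the space of embeddings of the edges into $U$, composed with $\phi$. Generically the pairs of distinct points of $\Gamma$ with the same $\phi$-image form a finite set of transverse double points, and no triple points occur.

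We must also exclude two further kinds of coincidence: a vertex $v$ whose image lies on $\phi(\Gamma\setminus\{v\})$, and two vertices with the same image. Both are codimension-$\geq 1$ conditions, removed by moving each vertex slightly in $Y$ while dragging its incident edge germs along. This works because the preimage $\phi^{-1}(\phi(\Gamma))$ is a $1$-dimensional set, and a generic point of the $2$-dimensional disk around $v$ misses it. The local picture near $v$ must be handled separately. Distinct half-edges at $v$ have distinct tangent directions after perturbation, and $\phi$ is injective near $v$, so there are no self-intersections close to $v$ apart from $v$ itself. That is acceptable, since $\phi(v)$ is a vertex of the image graph rather than a singularity.

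All perturbations are $C^1$-small and supported in small neighbourhoods. Hence they are realized by ambient isotopies close to the identity, preserve the cyclic orders and the spanning property, and keep us in $\mathrm{Homeo}_0(Y)$. The main obstacle is doing this transversality step carefully. A non-generic situation is a tangency, a triple point, or a vertex lying on an edge image, and we must check that each is a nonempty-codimension condition in the parameter space of perturbations. For edges I would first try explicit local bumps: near a tangency, add a small normal bump to one arc in the $\phi$-chart. This breaks the tangency into transverse crossings or removes it, and a dimension count then shows the set of bad perturbations is meagre. Finiteness of the set of double points follows from compactness of $\Gamma$ once transversality holds.
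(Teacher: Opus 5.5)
Your argument is correct. It is the same general-position idea as the paper's, but you perturb the graph rather than the map.

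The paper's short proof gets transverse crossings and no triple points by ``a $C^0$-small perturbation of $\phi$ supported away from the vertices''. Taken literally this is not available: $\phi$ is a fixed meromorphic function, and $[\Gamma]$ is the orbit of the graph under $\mathrm{Homeo}_0(Y)$. So it has to be read as precomposing $\phi$ with a homeomorphism close to the identity, i.e.\ as moving $\Gamma$. You do this directly, by small ambient isotopies that visibly stay in the class.

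Your decomposition also covers two points the paper's proof passes over:
\begin{itemize}
\item Condition (i). You obtain it by pushing $\Gamma$ off the finite set $\phi^{-1}(V)$. This is also what makes $\phi$ a local diffeomorphism near $\Gamma$, so that perturbations of $\phi\circ e$ lift through local inverses to perturbations of the edges $e$.
\item Coincidences involving images of vertices. These must be excluded because (ii) only allows double points coming from open edges.
\end{itemize}

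Beyond routine transversality you rely on two standard facts. First, an embedded graph in a surface can be made smooth by a homeomorphism close to the identity (taming in dimension $2$). Second, uniform local injectivity of $\phi$ near $\Gamma$ keeps the double-point locus away from the diagonal, which is what gives finiteness.

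One small imprecision: when you move a vertex $v$ inside a small disk $D$, the set to avoid is $\phi^{-1}(\phi(\Gamma\setminus D))\cap D$ together with the finitely many preimages of the other vertex images. It is not $\phi^{-1}(\phi(\Gamma))$, which contains $v$ itself.
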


\begin{proof}
Let $\widetilde\Gamma\in[\Gamma]$ be any representative. By a $C^0$-small perturbation of $\phi$ supported away from the vertices, we may assume that all self-intersections are transverse and that no three edges meet at a point outside the vertex set. Such a perturbation does not change the equivalence class $[\Gamma]$. Hence a generic representative exists.
\end{proof}

\smallskip
Observe that if $g(Y)>0$, then for every generic embedded ribbon graph $\Ga\subset Y$, the image $\phi(\Ga)$ must contain intersections of edges. In other words, the map $\phi$ cannot embed a spanning ribbon graph into $\bCP^1$.

Denote by $\sharp(\widetilde \Ga,\phi)$ the number of transverse self-intersections of $\phi(\widetilde \Ga)$. 
We define
\[
\sharp([\Gamma],\phi)
=
\min\{\sharp(\widetilde\Gamma,\phi) \mid \widetilde\Gamma\in[\Gamma] \text{ is generic}\}.
\]

\begin{lemma}\label{lm:minattained}
The minimum defining $\sharp([\Gamma],\phi)$ is attained.
\end{lemma}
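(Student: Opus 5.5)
The statement concerns a minimum over a set of nonnegative integers, so the plan is to reduce it to the well-ordering of $\mathbb{Z}_{\ge 0}$. The only substantive input is that the set over which we minimize is nonempty. Once that is settled, the minimum exists and is realized by some generic $\widetilde\Gamma\in[\Gamma]$.

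I would proceed as follows. First, I check that $\sharp(\widetilde\Gamma,\phi)$ is a finite nonnegative integer for every generic $\widetilde\Gamma$. The image of each closed edge is compact, and $\phi$ restricted to each open edge is an immersion whose only singularities are transverse double points by condition (ii). Transverse double points are isolated in the compact set $\phi(\widetilde\Gamma)$. Near a vertex, $\phi$ is a local diffeomorphism, since vertices avoid critical points and, by (i), even critical values. So the edges at a vertex leave it embedded, and double points cannot accumulate there. Hence the set of double points is discrete in a compact set, and therefore finite.

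Second, I invoke Lemma~\ref{lm:generic} to get that the set
\[
S=\{\sharp(\widetilde\Gamma,\phi)\mid \widetilde\Gamma\in[\Gamma]\ \text{generic}\}
\]
is nonempty. Third, since $S\subset\mathbb{Z}_{\ge0}$ is nonempty, it has a least element $m$. By the definition of $S$, there is some generic $\widetilde\Gamma_0\in[\Gamma]$ with $\sharp(\widetilde\Gamma_0,\phi)=m$. So the infimum is a minimum and is attained.

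The main obstacle is the finiteness step. One must rule out infinitely many double points accumulating at a vertex or at an endpoint of an edge. I would handle this with the local injectivity of $\phi$ near non-critical points: in a small disk around each vertex $v$, $\phi$ is a homeomorphism onto its image. There the edges incident to $v$ map to disjoint arcs meeting only at $\phi(v)$. Compactness of the rest of $\phi(\widetilde\Gamma)$, together with isolatedness of the transverse crossings, then gives finiteness. If the paper's notion of genericity is read as already including finiteness, this step is immediate and the proof reduces to well-ordering.
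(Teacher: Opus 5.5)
Your proposal is correct and follows the paper's proof: nonemptiness comes from Lemma~\ref{lm:generic}, and well-ordering of $\mathbb{Z}_{\ge 0}$ then gives the minimum. The paper simply takes the finiteness of $\sharp(\widetilde\Gamma,\phi)$ for granted; your extra check is sound in spirit, but ``discrete in a compact set'' does not by itself give finiteness, and the clean argument is this: a sequence of double points accumulating anywhere would, by compactness of $\widetilde\Gamma$, produce a limit pair of preimages, which contradicts either local injectivity of $\phi$ at non-critical points or the isolatedness of transverse double points.
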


\begin{proof}
The number of self-intersections of a generic representative is a nonnegative integer. By Lemma~\ref{lm:generic}, the set over which the minimum is taken is nonempty. Hence the minimum is attained.
\end{proof}

\begin{problem}
Find, or at least estimate from below, $\sharp([\Ga],\phi)$ for a given class $[\Ga]$.
\end{problem}

We start with a basic simplification.

\begin{lemma}\label{lm:noselfint}
For any class $[\Ga]$, there exists a representative $\widetilde \Ga\in[\Ga]$ such that $\phi(\widetilde \Ga)$ has no self-intersections of individual edges. Furthermore, any representative $\Ga^*\in[\Ga]$ realizing the minimum $\sharp([\Ga],\phi)$ has this property.
\end{lemma}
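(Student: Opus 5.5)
The plan is to remove self-intersections of a single edge by an explicit local move, an isotopy of $Y$ supported near that edge, which lowers the intersection count. Both claims then follow. The first claim follows by induction on the number of edge self-crossings. The second follows because a minimizer admitting such a move would contradict Lemma~\ref{lm:minattained}.

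\emph{Setup.} Start from a generic representative $\widetilde\Gamma\in[\Gamma]$, which exists by Lemma~\ref{lm:generic}. Suppose some open edge $e$ has a self-crossing. Then there are points $p\neq q$ in the interior of $e$ with $\phi(p)=\phi(q)$. Let $e_{pq}\subset e$ be the subarc between them. Its image $\phi(e_{pq})$ is a closed immersed loop in $\bCP^1$ based at $\phi(p)$.

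Among all self-crossings of $e$, I would pick an \emph{innermost} one. This means choosing $p,q$ so that $\phi|_{e_{pq}}$ is injective except at the endpoints. Such a pair always exists, because the set of self-crossing pairs is finite and ordered by inclusion of subarcs. With this choice, $\phi(e_{pq})$ is a simple closed curve, and it bounds two discs in $\bCP^1$. We choose the disc $D$ that does not contain the tangent direction "continuing" beyond the crossing.

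\emph{The move.} The aim is to replace $e_{pq}$ by a short arc near $p$ that crosses from the incoming branch to the outgoing branch. In the image this deletes the monogon loop and removes the double point $\phi(p)=\phi(q)$. On $Y$ this is realized by the curve-shortening homotopy of $e$ rel its endpoints. The key point is that this homotopy is an isotopy of the embedded graph in $Y$. The reason is that $e_{pq}$ together with a tiny arc from $q$ back to $p$ bounds a disc in $Y$. This disc is obtained by lifting $D$ via $\phi$, which is possible when $D$ contains no critical values. The graph is then pushed across this disc.

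\emph{Counting.} The crossing at $\phi(p)$ disappears. Every other edge crossing $\phi(e_{pq})$ passes through $D$. After the move, each such edge either loses intersections or is unaffected, provided the other parts of $\widetilde\Gamma$ inside the lifted disc are moved along with $e$ by the ambient isotopy. I would argue this via a bigon/monogon count: pushing across the disc cannot create new crossings if we push the whole contents of the disc. Therefore $\sharp$ strictly decreases. A small final perturbation restores genericity without increasing the count.

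\emph{Main obstacle.} The hard step is justifying that the monogon in the image lifts to an embedded disc in $Y$, so that the move is an isotopy in $\mathrm{Homeo}_0(Y)$. Two things can go wrong. First, $D$ may contain critical values of $\phi$, so the lift is branched. Second, $D$ may contain images of vertices, so pushing across it moves vertices and may create new crossings.

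I would first try to choose $D$ to be the side without critical values. If both sides contain critical values, I would instead use the freedom in the class $[\Gamma]$ to route around them. The argument is that $e_{pq}\cup\{\text{small arc}\}$ is null-homotopic in $Y$, hence bounds a disc in $Y$, and the isotopy is performed across that disc directly in $Y$. The decrease in crossings is then checked in $\bCP^1$ by general position. This gives the first claim by induction on the number of edge self-crossings. The second claim follows because such a move applied to a minimizer $\Gamma^*$ would give a generic representative with strictly fewer crossings, contradicting minimality.
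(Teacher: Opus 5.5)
\textbf{There is a genuine gap, and it sits exactly at the step you flag as the main obstacle. That step fails in every instance, not just in awkward cases.}

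Since $p\neq q$ and $\phi(p)=\phi(q)=x$, the points $p$ and $q$ lie in different sheets over $x$. So there is no ``tiny arc from $q$ back to $p$'' in $Y$. The image-level shortcut at $x$, from the incoming branch to the outgoing branch, lies in a small disc $V$ free of critical values. Lifted from the incoming branch near $p$, it stays in the sheet through $p$ and never reaches the outgoing branch near $q$. So the move is not defined on $Y$.

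Worse, the lift of the loop $\phi(e_{pq})$ starting at $p$ is $e_{pq}$, which ends at $q\neq p$. Hence its monodromy in $\pi_1(\bCP^1\setminus\{\text{critical values}\},x)$ is nontrivial. Consequently \emph{both} discs bounded by the simple closed curve $\phi(e_{pq})$ contain critical values. Your first option, choosing the side $D$ with no critical values, is therefore never available. Your fallback, that ``$e_{pq}$ plus a small arc'' is null-homotopic in $Y$, refers to a curve that does not exist, and no such null-homotopy is available in general. The counting step, which pushes ``the whole contents'' of the lifted disc, inherits the same problem.

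\textbf{The paper's proof does not rescue this, and in fact the statement itself fails.} The paper argues differently: it isotopes $e\cap U_p$ inside a small disc $U_p$ that maps homeomorphically onto $V$, keeping the endpoints on $\partial U_p$ fixed. That cannot remove the crossing either. Two properly embedded arcs in the disc $V$ that cross transversally once have interlaced endpoints on $\partial V$. Hence every arc with the same endpoints as $\phi(e\cap U_p)$ still meets $\phi(e\cap U_q)$.

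Here is a counterexample to the statement.
\begin{itemize}
\item Let $Y$ have genus $2$, and let $\phi$ have degree $2$ with deck involution $\iota$.
\item Let $\Ga$ be $\Xi_2$ together with one extra loop $c$ at the vertex, drawn as the diagonal of the octagon that represents $a_1b_1a_1^{-1}b_1^{-1}$.
\item Then $\Ga$ spans $Y$ (the octagon is cut into two pentagons). The loop $c$, and its image $\widetilde c$ in any representative, is an essential separating simple closed curve.
\end{itemize}
Suppose a generic representative had no edge self-intersection.
\begin{itemize}
\item Genericity keeps the open edge away from the image of its vertex, so $\phi$ is injective on the closed curve $\widetilde c$.
\item Then $C=\phi(\widetilde c)$ is a simple closed curve avoiding the critical values, and $\phi^{-1}(C)=\widetilde c\sqcup\iota\widetilde c$.
\item For each disc $D$ bounded by $C$, the preimage $\phi^{-1}(D)$ is either two discs, in which case $\widetilde c$ is inessential, or connected with boundary $\widetilde c\sqcup\iota\widetilde c$.
\item If both preimages are connected, then $Y\setminus\widetilde c$ is connected through $\iota\widetilde c$, so $\widetilde c$ is non-separating.
\end{itemize}
Either way this is a contradiction. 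So every generic representative of this class, in particular any minimizer, has an edge self-intersection. The second assertion of the lemma is false, and no repair of either argument can work without additional hypotheses.
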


\begin{proof}[Proof of Lemma~\ref{lm:noselfint}]
Let $\widetilde\Gamma\in[\Gamma]$ be a generic representative, which exists by Lemma~\ref{lm:generic}.
 Suppose that some edge $e\subset \widetilde\Ga$ has a self-intersection in the image. Then there exist two distinct interior points $p,q\in e$ such that $\phi(p)=\phi(q)=x$, and since the graph is generic, both points are regular for $\phi$.

Choose pairwise disjoint small discs $U_p,U_q\subset Y$ around $p$ and $q$ such that
\[
\phi|_{U_p}:U_p\to V,\qquad \phi|_{U_q}:U_q\to V
\]
are homeomorphisms onto the same sufficiently small disc $V\subset \bCP^1$ centered at $x$. Inside $V$, the two arcs $\phi(e\cap U_p)$ and $\phi(e\cap U_q)$ meet transversely at $x$. Since $\phi|_{U_p}$ is a homeomorphism, one may isotope the arc $e\cap U_p$ inside $U_p$, fixing its endpoints on $\partial U_p$, so that its image under $\phi$ becomes a nearby arc joining the same endpoints in $V$ but disjoint from $\phi(e\cap U_q)$. Choosing the neighborhoods small enough ensures that no new intersections with the rest of $\phi(\widetilde\Ga)$ are created.

Thus one removes one self-intersection of the image of an edge without changing the class $[\Ga]$. Repeating the procedure finitely many times eliminates all self-intersections of individual edges. In particular, if a representative realizing $\sharp([\Ga],\phi)$ had such a self-intersection, the above move would strictly decrease the number of intersections, a contradiction.
\end{proof}

\begin{proposition}\label{pr:im}
\rm
(i) For any compact Riemann surface $Y$ of genus $g$, any class of ribbon graphs $[\Ga]$, and any meromorphic function $\phi:Y\to \bCP^1$,
\[
\sharp([\Ga],\phi)\ge g.
\]

\smallskip
\noindent
(ii) For any rational function $\phi:\bCP^1\to \bCP^1$ and any class $[\Ga]$,
\[
\sharp([\Ga],\phi)=0.
\]
\end{proposition}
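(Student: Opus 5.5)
For (ii) we compress the whole graph into a region where $\phi$ is injective; for (i) we turn an immersed drawing with $k$ crossings into an embedding of $\Ga$, with the same ribbon structure, into a surface of genus at most $k$, and compare with $g$.

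\emph{Part (ii).} Here $Y=\bCP^1$, so $g=0$ and we only need a generic representative of $[\Ga]$ whose image has no crossings.
\begin{enumerate}
\item[(a)] Pick a regular point $y_0$ of $\phi$ and a closed disk $D\ni y_0$, free of critical points, on which $\phi$ is injective. This exists by the inverse function theorem.
\item[(b)] Choose an orientation-preserving homeomorphism $h$ of the sphere that shrinks the complement of a small neighbourhood of a point $z\notin\Ga$ into $D$. It is isotopic to the identity, since $\mathrm{Homeo}^+(S^2)$ is connected, so $h\in\mathrm{Homeo}_0(Y)$.
\item[(c)] Then $\widetilde\Ga=h(\Ga)\in[\Ga]$ lies in $D$, and $\phi|_D$ is an embedding. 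Hence $\phi(\widetilde\Ga)$ is embedded and contains no critical values.
\end{enumerate}
This representative is generic with $\sharp(\widetilde\Ga,\phi)=0$.

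\emph{Part (i).}
\begin{enumerate}
\item[(a)] By Lemma~\ref{lm:minattained} and Lemma~\ref{lm:generic}, take a generic $\widetilde\Ga\in[\Ga]$ with $k=\sharp([\Ga],\phi)$ transverse crossings. Genericity (ii) says all crossings come from pairs of points in open edges. So the images of the vertices are pairwise distinct and do not lie on images of other edges.
\item[(b)] At each vertex $v$, $\phi$ is a local orientation-preserving homeomorphism. Therefore the counterclockwise cyclic order of the image half-edges at $\phi(v)$ equals the ribbon structure of $\widetilde\Ga$ at $v$, which is the structure induced by $Y$. In other words, $\phi(\widetilde\Ga)$ is a drawing of the abstract graph on $S^2$ realizing the rotation system $\rho$ of $\Ga$, with $k$ crossings.
\item[(c)] At each crossing, remove two small disks, one on each side along one of the two strands, and glue in a tube. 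Reroute one strand over the tube so that it no longer meets the other strand.
\item[(d)] This happens away from the vertices, so the rotation system is unchanged. We get an embedding of $\Ga$ with rotation system $\rho$ into a closed oriented surface $S$ of genus $k$.
\end{enumerate}

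\emph{The key step.} Any embedding of a graph with prescribed rotation system $\rho$ into a closed oriented surface $S$ has genus $g(S)\ge g(\rho)$. Here $g(\rho)$ is the genus of the thickened surface $Y_\rho$ obtained by face tracing. To see this, a regular neighbourhood of the embedded graph in $S$ is the ribbon surface determined by $\rho$. Then $S$ is obtained from this ribbon surface by attaching surfaces with boundary to its boundary circles. Attaching disks gives the minimum genus, and any other choice can only raise it; Euler characteristic makes this precise.

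Because $\Ga$ spans $Y$, $Y=Y_\rho$, so $g(\rho)=g$. Hence
\[
k=g(S)\ge g,
\]
which proves (i).

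I expect the main obstacle to be step (b) together with the key step: checking carefully that the immersion preserves the cyclic orders, including orientation conventions, and that the non-cellular embedding still bounds the genus below. Both reduce to the local homeomorphism property of $\phi$ and to the Euler-characteristic inequality for the capped-off neighbourhood.
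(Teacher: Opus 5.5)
Your argument is correct. Part (ii) is the paper's proof, with one small slip that the paper also leaves implicit. A disk $D$ free of critical points need not have $\phi(D)$ free of critical values: for $\phi(z)=z^3-3z$ the regular point $z=2$ maps to the critical value $2=\phi(-1)$. So ``hence $\phi(\widetilde\Gamma)$ contains no critical values'' does not follow as written. Take $D$ disjoint from the finite set $\phi^{-1}(\{\text{critical values}\})$, and genericity condition (i) then holds.

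For (i) you take a different route from the paper. The paper resolves each double point of $\phi(\Gamma^*)$ into a $4$-valent vertex, which gives an embedded planar graph $H$ whose capped thickening is a sphere. It then restores the crossings one at a time, each restoration raising the genus of the capped ribbon surface by at most one. Since the ribbon surface of $\Gamma^*$ is $Y$, this yields $g\le d$.

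You instead tube every crossing to get an honest embedding of the graph, with unchanged rotation system, into a closed surface of genus $k$. You then use that the face-traced surface has minimal genus among all closed oriented surfaces carrying a given rotation system. Your Euler-characteristic sketch of that step is right once one notes that every component of $S\setminus\operatorname{int}N$ has at least one boundary circle, because $S$ is connected. This gives $\chi(S\setminus\operatorname{int}N)\le F$ and hence $\chi(S)\le V-E+F$.

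What your version buys:
\begin{itemize}
\item It isolates a clean, reusable inequality.
\item It makes explicit what the paper uses tacitly: $\phi$ is an orientation-preserving local homeomorphism at the vertices, so the drawing realizes the rotation system of $\Gamma$.
\item It does not need Lemma~\ref{lm:noselfint}, since self-crossings of a single edge are tubed like any other crossing.
\end{itemize}
The paper's version avoids building an auxiliary surface and is purely local bookkeeping on the sphere. Made precise, restoring one crossing changes $V-E$ by $+1$ and decreases the number of faces by at most $3$, so it raises the genus by at most $1$.
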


\begin{proof}[Proof of Proposition~\ref{pr:im}]
(i) Choose a representative $\Ga^*\in[\Ga]$ realizing $\sharp([\Ga],\phi)$ and, by Lemma~\ref{lm:noselfint}, assume that no individual edge has self-intersections in the image. Let
\[
d=\sharp([\Ga],\phi)=\sharp(\Ga^*,\phi).
\]
Resolve each transverse double point of the immersed graph $\phi(\Ga^*)\subset \bCP^1$ by inserting a $4$-valent vertex. This produces an embedded graph $H\subset \bCP^1$. Let $N(H)$ be a closed ribbon neighbourhood of $H$ in the sphere. Since $H$ is embedded in $\bCP^1$, the surface obtained from $N(H)$ by capping its boundary components with disks has genus $0$.

Now reconstruct the original immersed image from $H$ one crossing at a time. At a former double point, the passage from the neighbourhood of the $4$-valent vertex in $N(H)$ to the neighbourhood of a transverse crossing is achieved by attaching one $1$-handle to the thickened surface. Consequently, after all $d$ crossings are restored, the closed surface determined by the resulting ribbon graph has genus at most $d$.

On the other hand, the ribbon surface determined by $\Ga^*$ is precisely $Y$, because $\Ga^*$ spans $Y$. Hence
\[
g(Y)=g\le d=\sharp([\Ga],\phi),
\]
which proves (i).

(ii) Let $Y=\bCP^1$ and let $\phi:\bCP^1\to \bCP^1$ be rational. Choose a closed topological disk $D\subset \bCP^1$ avoiding the critical points of $\phi$ and small enough that $\phi|_D$ is injective. Since every orientation-preserving homeomorphism of the sphere is homotopic to the identity, any embedded graph on $\bCP^1$ can be moved by such a homeomorphism into $D$. Hence every class $[\Ga]$ contains a representative $\Ga'\subset D$. For this representative, $\phi(\Ga')$ is embedded, so $\sharp(\Ga',\phi)=0$. Therefore $\sharp([\Ga],\phi)=0$.
\end{proof}

The lower bound in Proposition~\ref{pr:im}(i) is not the whole story. The underlying abstract graph itself may force extra intersections.

\begin{proposition}\label{pr:crossing}
Let $\Ga\subset Y$ be a ribbon graph and let $G$ be its underlying abstract multigraph. Then for any meromorphic function $\phi:Y\to \bCP^1$,
\[
\sharp([\Ga],\phi)\ge \max\{g(Y),\crs(G)\},
\]
where $\crs(G)$ is the planar crossing number of $G$.
\end{proposition}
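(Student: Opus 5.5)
The bound $\sharp([\Ga],\phi)\ge g(Y)$ is already Proposition~\ref{pr:im}(i). So it suffices to prove $\sharp([\Ga],\phi)\ge \crs(G)$ and then take the maximum of the two lower bounds.

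By Lemma~\ref{lm:minattained}, choose a generic representative $\Ga^*\in[\Ga]$ with $\sharp(\Ga^*,\phi)=\sharp([\Ga],\phi)$. Since $\Ga^*=h(\Ga)$ for some homeomorphism $h\in\mathrm{Homeo}_0(Y)$, its underlying abstract multigraph is again $G$. The composite $\phi\circ h|_\Ga:G\to\bCP^1$ is then a drawing of $G$ in the sphere, and the genericity conditions (i)--(ii) give exactly what a ``good'' drawing needs.

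\begin{itemize}
\item Edges map by immersions.
\item All singularities of the image are transverse double points between interiors of edges.
\item No vertex lies on the image of another edge's interior, since by (ii) every singular point comes from two points in open edges.
\end{itemize}

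By Lemma~\ref{lm:noselfint}, we may moreover assume that no edge crosses itself.

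The one point needing care is that two distinct vertices might have the same image. Condition (ii) only speaks about points in open edges, so I would first perturb $\Ga^*$ slightly near the vertices, isotopically in $Y$, to separate coincident vertex images. Such a perturbation stays in the class and creates no new crossings. Alternatively, I would read such a coincidence as a singularity that the genericity condition already excludes.

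With these points settled, $\phi(\Ga^*)$ is a drawing of $G$ in $\bCP^1$, which is equivalent to a drawing in the plane. The drawing has exactly $\sharp(\Ga^*,\phi)$ crossings, counted as pairs of edge-interior points with the same image. By the definition of the crossing number as the minimum number of crossings over all such drawings, $\sharp(\Ga^*,\phi)\ge \crs(G)$. Combining this with Proposition~\ref{pr:im}(i) gives the claim.

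The main obstacle is only the bookkeeping between the paper's notion of genericity and the standard notion of a drawing: distinct vertices must have distinct images, and crossings must be counted with multiplicity one. The vertex perturbation described above handles this.
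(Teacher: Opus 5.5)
Your proposal is correct and follows the paper's proof: a generic representative (you take a minimizing one) gives a drawing of $G$ in the sphere whose crossings are exactly its transverse double points, so $\sharp([\Ga],\phi)\ge\crs(G)$, and Proposition~\ref{pr:im}(i) supplies the $g(Y)$ bound. One small correction: coincident vertex images are already excluded by genericity condition (ii), and you should rely on that reading, because your first fix (isotoping one vertex off another) can in general create new crossings between the two stars of edges, so it does not ``create no new crossings'' as claimed.
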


\begin{proof}[Proof of Proposition~\ref{pr:crossing}]
For any generic representative $\widetilde\Ga\in[\Ga]$, the image $\phi(\widetilde\Ga)$ is an immersion of the underlying abstract multigraph $G$ into $\bCP^1$. Hence it has at least $\crs(G)$ transverse double points by the definition of the planar crossing number. Therefore
\[
\sharp([\Ga],\phi)\ge \crs(G).
\]
Combining this with Proposition~\ref{pr:im}(i), which gives $\sharp([\Ga],\phi)\ge g(Y)$, we obtain the required bound.
\end{proof}

\begin{corollary}\label{cor:mainconjfalse}
The equality $\sharp([\Ga],\phi)=g(Y)$ does not hold for all classes $[\Ga]$ and all meromorphic functions $\phi$.
\end{corollary}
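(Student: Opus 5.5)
The plan is to produce one explicit counterexample: a surface $Y$, a meromorphic function $\phi$, and a class $[\Ga]$ for which Proposition~\ref{pr:crossing} forces $\sharp([\Ga],\phi)\ge \crs(G)>g(Y)$. It therefore suffices to find a spanning ribbon graph on some $Y$ whose underlying abstract graph $G$ has planar crossing number strictly larger than the genus of $Y$.

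The cleanest choice is the torus $Y$ ($g=1$), together with any meromorphic function on it, for instance a Weierstrass $\wp$-function. For $G$ take a graph that embeds cellularly on the torus but has crossing number at least $2$. The complete graph $K_7$ embeds in the torus as a triangulation. Every face is a disk, so this embedding is a spanning ribbon graph $\Ga$. Its crossing number is $\crs(K_7)=9$, which follows from Guy's formula for $K_n$, known to hold for $n\le 12$. Any generic representative of $[\Ga]$ maps to an immersion of $K_7$ in $\bCP^1$, so Proposition~\ref{pr:crossing} gives $\sharp([\Ga],\phi)\ge 9>1=g(Y)$.

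To avoid citing Guy's value, I would instead use the elementary Euler-formula bound $\crs(G)\ge |E|-3|V|+6$ for simple graphs. This bound is proved by deleting one edge per crossing until the graph is planar. For $K_7$ it gives $21-21+6=6>1$. This is self-contained apart from the standard fact that $K_7$ triangulates the torus.

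If we want counterexamples in every genus $g\ge1$, we can take a cellular triangulation of the genus-$g$ surface with $n$ vertices and $3(n+2g-2)$ edges (Euler), which is simple. The bound then gives $\crs\ge 3(n+2g-2)-3n+6=6g>g$. Such triangulations exist by the Map Color Theorem, or by adding vertices to any triangulation.

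The main point needing care is that the embedded graph must be spanning and must have no vertices of valence $1$ or $2$. Triangulations satisfy both conditions: faces are disks, and every vertex has valence at least $3$. Beyond that, only the crossing-number estimate requires attention, and the edge-deletion Euler argument handles it.
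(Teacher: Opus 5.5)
Your proof is correct and is essentially the paper's own argument: the paper likewise takes $Y$ to be a torus carrying an embedded (spanning) copy of $K_7$ and applies Proposition~\ref{pr:crossing} with $\crs(K_7)=9>1=g(Y)$. Your self-contained fallback via the Euler bound $\crs(G)\ge |E|-3|V|+6$ and your extension to every genus $g\ge 1$ are valid additions not in the paper; the one point to tighten is that the higher-genus triangulations must be simplicial (for example, obtained by barycentric subdivision), so that the simple-graph bound applies.
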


\begin{proof}
Take $Y$ to be a torus and let $\Ga\subset Y$ be an embedded copy of the complete graph $K_7$, which exists since $K_7$ admits a toroidal embedding. Then $g(Y)=1$, while $\crs(K_7)=9$. By Proposition~\ref{pr:crossing}, for every meromorphic function $\phi:Y\to \bCP^1$ one has
\[
\sharp([\Ga],\phi)\ge 9>1=g(Y).
\]
\end{proof}

This suggests the following corrected problem.

\begin{problem}
For which classes $[\Ga]$ does one have $\sharp([\Ga],\phi)=g(Y)$? In particular, is this always true when the underlying abstract graph of $\Ga$ is planar?
\end{problem}

\begin{proposition}\label{pr:necessaryeq}
Let $G$ be the underlying abstract graph of $\Ga\subset Y$. A necessary condition for the equality
\[
\sharp([\Ga],\phi)=g(Y)
\]
to hold is that
\[
\crs(G)\le g(Y).
\]
In particular, the planar underlying-graph case is the natural regime in which one may hope for equality.
\end{proposition}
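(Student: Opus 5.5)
This is a direct consequence of Proposition~\ref{pr:crossing}, so I will argue by contraposition. Suppose $\sharp([\Ga],\phi)=g(Y)$ for some meromorphic function $\phi:Y\to\bCP^1$. By Proposition~\ref{pr:crossing},
\[
g(Y)=\sharp([\Ga],\phi)\ge \max\{g(Y),\crs(G)\}\ge \crs(G).
\]
Hence $\crs(G)\le g(Y)$. Equivalently, if $\crs(G)>g(Y)$, then $\sharp([\Ga],\phi)\ge\crs(G)>g(Y)$ for every $\phi$, and equality fails.

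The key steps, in order, are as follows.
\begin{enumerate}
\item Invoke Lemma~\ref{lm:minattained}, so that $\sharp([\Ga],\phi)$ is realized by a generic representative.
\item Use the argument of Proposition~\ref{pr:crossing}: the image of any generic representative is an immersion of $G$ in the sphere, so it has at least $\crs(G)$ crossings.
\item Compare this lower bound with the assumed equality, as in the display above.
\end{enumerate}

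The only point needing care, and hence the main obstacle, is step 2. One must check that the immersion $\phi(\widetilde\Ga)$ is an admissible drawing in the sense of the definition of $\crs(G)$. Genericity gives this directly: the double points are transverse, they lie in open edges, and no three edges meet at a point. Lemma~\ref{lm:noselfint} shows that a minimizing representative has no self-crossings of individual edges. Moreover, the crossing number is by definition a minimum over all drawings, so extra features of a drawing can only increase its count. Hence the bound holds regardless of which drawing conventions are used to define $\crs(G)$.

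For the final sentence of the statement: when $G$ is planar, $\crs(G)=0\le g(Y)$, so the necessary condition holds automatically. This is why the planar case is the natural regime in which to look for equality. By Proposition~\ref{pr:planar}, such planar spanning graphs exist on every surface, so this regime is nonempty.
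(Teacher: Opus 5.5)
Your proposal is correct and follows the paper's proof: the paper likewise reads off $\crs(G)\le\sharp([\Ga],\phi)=g(Y)$ directly from Proposition~\ref{pr:crossing}. Your extra steps (attainment of the minimum, and absence of edge self-crossings via Lemma~\ref{lm:noselfint}) are harmless but not needed, since $\sharp(\widetilde\Ga,\phi)\ge\crs(G)$ already holds for every generic representative and hence for the minimum.
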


\begin{proof}
By Proposition~\ref{pr:crossing}, one always has
\[
\sharp([\Ga],\phi)\ge \max\{g(Y),\crs(G)\}.
\]
Therefore, if $\sharp([\Ga],\phi)=g(Y)$, then necessarily $\crs(G)\le g(Y)$.
\end{proof}

\begin{problem}
Are there restrictions on the distribution of the branch points of $\phi$ among the connected components of $\bCP^1\setminus \phi(\widetilde \Ga)$, where $\widetilde \Ga$ is generic?
\end{problem}

\subsection{Inverse set-up.}
Consider an immersion $im:\Theta\to \bCP^1$ of an abstract multigraph $\Theta$. This immersion induces a ribbon structure on $\Theta$ by taking the cyclic orientation of half-edges at each vertex from the orientation of $\bCP^1$ and lifting it back to $\Theta$. Denote by $N(\Theta)$ the corresponding ribbon surface with boundary.

\begin{proposition}\label{lm1}
An immersed graph $im:\Theta\to \bCP^1$ induces a topological branched covering of $\bCP^1$ by a compact Riemann surface $Y$ containing a ribbon graph $\Ga\subset Y$ whose image is $im(\Theta)$.
\end{proposition}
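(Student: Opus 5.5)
The plan is to build $Y$ by hand: start from the ribbon surface $N(\Theta)$, extend the immersion to it, and then cap off the boundary by branched discs.

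\textbf{Step 1: extend the immersion to $N(\Theta)$.} Since $im$ is an immersion, it extends to a local homeomorphism $\psi:N(\Theta)\to\bCP^1$ that is orientation preserving and defined on a thin regular neighbourhood. Near a vertex, $N(\Theta)$ is a small disc that maps homeomorphically onto a disc around the image vertex. Along an edge, it is a thin band mapped onto a thin band around the immersed arc, and this band may overlap itself or other bands at double points. The ribbon structure on $N(\Theta)$ is exactly the one induced from the orientation of $\bCP^1$, so the bands are attached compatibly and $\psi$ is well defined. Moreover $\Theta\subset N(\Theta)$ is a spine, so $\psi|_\Theta=im$.

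\textbf{Step 2: analyse the boundary.} Each boundary component $C$ of $N(\Theta)$ is a circle, and $\psi|_C$ is an immersed closed curve in $\bCP^1$. It runs parallel to the image of a boundary cycle (face walk) of the ribbon graph.

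\textbf{Step 3: cap each boundary circle by a branched disc.} For each $C$, choose a point $p_C\in\bCP^1$ not on $\psi(C)$, so that $\psi|_C$ becomes an immersed curve in the plane $\bCP^1\setminus\{p_C\}$. Let $k$ be the winding number of $\psi|_C$ around a point, taken with the sign of the orientation induced from the complement of $N(\Theta)$.

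\emph{Easy case.} If the curve is a small loop bounding a disc on the correct side, attach a disc $D_C$ and extend $\psi$ as a branched cover $z\mapsto z^{|k|}$ in suitable coordinates, with a single branch point.

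\emph{General case.} Use the classical fact that any closed curve in $S^2$ is the boundary restriction of a branched covering of a disc, i.e.\ a proper map $D\to S^2$ that is a local homeomorphism near $\partial D$. This holds because a branched map from a disc can have arbitrary degree, and it can wrap over the whole sphere if needed. Concretely, I would first homotope $\psi|_C$, inside a collar that I add to the boundary, to a standard curve. Collars mapped by local homeomorphisms allow regular homotopy, and Whitney--Graustein on $S^2$ reduces the curve to a loop with one or two kinks. I would then cap the standard curve explicitly: a disc mapped by $z\mapsto z^m$, possibly composed with a degree-one cover over the complementary disc.

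\textbf{Step 4: assemble $Y$.} Gluing the caps gives a closed oriented surface $Y=N(\Theta)\cup\bigcup_C D_C$ and a continuous map $\phi:Y\to\bCP^1$. This map is a local homeomorphism away from finitely many points and locally modelled on $z\mapsto z^m$ there, so it is a topological branched covering. Pulling back the complex structure of $\bCP^1$ via $\phi$ makes $Y$ a compact Riemann surface and $\phi$ meromorphic. Set $\Ga=\Theta\subset N(\Theta)\subset Y$; then $\phi(\Ga)=im(\Theta)$, and since every complementary region of $\Ga$ in $Y$ is a collar union a disc, $\Ga$ even spans $Y$.

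\textbf{Main obstacle.} The hard part is Step 3, capping an arbitrary immersed boundary curve with a branched disc on the correct side. The curve may wind negatively with respect to the orientation the cap must carry, and then there is no orientation-preserving map from a disc with that boundary that is a local homeomorphism near the boundary. A planar degree argument (Blank's theorem) shows that the curve must extend to an orientation-preserving immersion or branched map of a disc on the correct side. On $S^2$ this obstruction should disappear, because one may push the curve across $\infty$: adding a full sheet covering $S^2$ increases the local degrees by one everywhere. So, taking the degree large enough, every local degree becomes positive and a branched extension exists, e.g.\ by Stoïlow-type factorisation. Making this precise, including checking that the branch points can be placed in the interior, is where I expect the real work to lie.
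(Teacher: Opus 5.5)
Your Steps 1, 2 and 4 match the paper's argument. Step 3, however, carries all the content, and it is not established; neither justification you offer works.

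First, the principle that ``collars mapped by local homeomorphisms allow regular homotopy'' is false. The trace $(t,s)\mapsto\ga_s(t)$ of a regular homotopy is a local homeomorphism only when the curve sweeps monotonically to one side. In general there is no immersed annulus joining two regularly homotopic curves. Already in the plane, take a circle with one external clockwise curl and one internal counterclockwise curl. It has rotation number $1$ and a region of winding number $-1$, and by Whitney--Graustein it is regularly homotopic to a round circle. Yet an immersed annulus from it to the circle, capped by the round disc, would be an orientation-preserving immersed disc whose local degree is negative somewhere. An Euler characteristic count shows that letting the disc wrap around $\bCP^1$ does not rescue this.

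Second, your fallback is to add full sheets until all local degrees are positive and then invoke Sto\"{\i}low. Positivity is only a necessary condition, and you give no argument that it suffices. Sto\"{\i}low's theorem puts a complex structure on an open discrete map that already exists; it cannot produce one. So the claim that every cooriented immersed circle in $\bCP^1$ bounds a branched \emph{disc} on the prescribed side is unproved, as your last paragraph concedes, and the proof does not close.

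The missing idea is that disc caps are unnecessary, since the statement places no condition on the genus of $Y$. The paper (Lemma~\ref{lm:fill}) caps each boundary circle by a Seifert-type surface:
\begin{itemize}
\item smooth every double point of the boundary curve compatibly with its orientation;
\item for each resulting embedded Seifert circle, take the disc on its positive side, mapped by inclusion (overlapping discs are just different sheets);
\item reconnect these discs by one band per double point, mapped into a small neighbourhood of the crossing with a single simple branch point.
\end{itemize}
Every piece is an embedded disc on the correct side, so there are no positivity or regular-homotopy issues. The result is a connected surface $\Si_C$ with one boundary component, possibly of positive genus, with a branched covering extending the boundary immersion.

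With this cap in place, your Steps 1, 2 and 4 go through unchanged. The one exception is your final claim that $\Ga$ spans $Y$, which must be dropped: the complementary regions are now the surfaces $\Si_C$ with a collar attached, not discs.
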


In other words, an immersed graph gives rise to a ribbon graph together with an appropriate meromorphic function.

\begin{lemma}\label{lm:fill}
Given an immersion $im:S^1\to \bCP^1$ together with a choice of one of the two coorientations, there exists a compact connected oriented surface $\Si$ with one boundary component and a topological branched covering
\[
F:\Si\to \bCP^1
\]
whose restriction to $\partial\Si$ is the given immersion $im$, with the chosen boundary coorientation. In general, $\Si$ need not be a disk.
\end{lemma}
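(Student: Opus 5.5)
Given a coorientation of the immersed curve $\ga=im(S^1)$, assign to each region $R$ of $\bCP^1\setminus\ga$ an integer $w(R)$: the "winding number" of $\ga$ relative to a base region, normalized so that crossing $\ga$ in the direction of the chosen coorientation lowers $w$ by one. This plays the role of a local degree. Since a branched cover has nonnegative local degree, first shift the base point so that $\min_R w(R)=0$. Note, however, that $w$ need not vanish on any region, which is one reason $\Si$ cannot always be a disk. The plan is then to build $\Si$ as a union of sheets over the regions, with $w(R)$ sheets over $R$, glued along the arcs of $\ga$ so that exactly one sheet terminates at each arc. That terminating sheet is the one along which $\partial\Si$ runs.

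\textbf{Step 1 (Blank's construction, adapted).} Cut $\bCP^1$ along a system of arcs (a "cut system") from each double point of $\ga$ out to a base region of minimal $w$. I would follow the classical Blank / Poénaru treatment of extending an immersed circle to an immersed disk, but allow branch points. Choose a point $\infty$ in a region of minimal index and draw rays from $\infty$. Over the disk $\bCP^1\setminus\{\infty\}$, form the multi-sheeted surface obtained by stacking $w(R)$ copies of each region. Glue copies across arcs of $\ga$ via the standard rule that the top sheet ends on $\ga$. At each double point the four local indices are $k,k+1,k+1,k+2$ (up to shift), so the local gluing is a standard quadrant model.

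\textbf{Step 2 (monodromy and branch points).} The gluing above is only well defined on the complement of finitely many points, one chosen inside each region. Around such a point the stacked sheets of $R$ may be permuted by a nontrivial monodromy. Capping each resulting puncture with a disk is where branch points, and hence topology, enter. Here I would insert, for each region, a single branch point whose local monodromy is the permutation needed to make the gluing consistent. Existence follows because the sheets over $R$ form a trivial cover of $R$ minus one point, apart from a permutation forced by boundary compatibility. Any permutation is realized by a branched disk cover (a product of cycles), so each puncture can be filled. The result is a compact oriented surface $\Si$ with $F:\Si\to\bCP^1$, branched only at interior points, and with $F|_{\partial\Si}=im$.

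\textbf{Step 3 (connectedness, one boundary, non-disk example).} The boundary of $\Si$ is a single circle because it is traced once by the lift of $\ga$. If $\Si$ is disconnected, discard the closed components, which are closed branched covers; the component containing $\partial\Si$ still has one boundary circle. The last sentence of the lemma is justified by an example. Take a figure-eight curve with the coorientation pointing into both lobes. Then the winding numbers are $1$ on each lobe and $0$ outside, but the curve bounds no immersed disk, since the rotation number is $0$ rather than $\pm1$. Riemann–Hurwitz then forces $\Si$ to have genus at least $1$, or at least to be a disk with branch points, so the general filling is not an immersed disk.

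\textbf{Main obstacle.} The hard point is Step 2: making the sheet gluing globally consistent and showing the monodromy obstruction can always be killed by interior branching. My first attempt would be to avoid global bookkeeping by induction on the number of double points. Smooth a double point in the way compatible with the coorientation, producing either two curves or one curve with fewer crossings. Fill the pieces, then reattach by a band (a $1$-handle) over a neighbourhood of the crossing. This mirrors the handle-counting argument in Proposition~\ref{pr:im}(i). The base case is a simple closed curve, which bounds a disk on its cooriented side.
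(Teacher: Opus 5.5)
The concrete gap is in Step~1. Your rule stacks the sheets by level, glues level $i$ to level $i$, and lets the top sheet end on $\ga$. At a crossing with local indices $k,k+1,k+1,k+2$, the top sheet over the index-$(k+2)$ quadrant then ends on \emph{both} of its edges, and level $k+1$ runs through the other three quadrants. That is exactly the oriented smoothing, so the boundary of the surface you build is the union of the Seifert circles, not $im(S^1)$. This rule is also globally level-preserving, so there is no monodromy at all and Step~2 has nothing to repair.

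The missing idea is the correct crossing model: $k$ full disks together with two \emph{disjoint} half-disks bounded by the two branches. In other words, the two sheets of the top quadrant that terminate on its two edges must be different. With this model at every crossing and the evident one (full strips plus one half-strip) along every arc, the proof goes through:
\begin{enumerate}
\item Matching strips to sectors in any way gives the preimage of a regular neighbourhood of $\ga$.
\item Its inner boundary over each region $R$ is some $w(R)$-fold covering of a circle.
\item Every such covering extends over a disk, with one branch point of order $m$ for each $m$-cycle.
\item The component containing $\partial\Si$ is the required surface, and $\partial\Si$ is one circle because it is a lift of the parametrized curve.
\end{enumerate}
No base point, rays, or induction are needed.

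Repaired this way, your argument is a genuinely different route from the paper's. The paper takes the disks bounded by the Seifert circles on their positive sides and joins them by one band per crossing carrying a simple branch point. That gives an explicit surface with $\chi(\Si)=s-n$, where $s$ is the number of Seifert circles and $n$ the number of crossings. Your local argument makes existence transparent but, as it stands, gives no control of the genus.

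Your fallback (smooth a crossing, fill, reattach a band over a neighbourhood of the crossing) is the paper's construction done one crossing at a time. The band cannot simply be a $1$-handle mapped near the crossing while the filled pieces are left as they are; the same caveat applies to the paper's wording, where the Seifert disks are mapped by the identity. The sheet bounded by the smoothing arc that cuts off the corner of the index-$k$ quadrant also covers the thin sliver between that arc and the crossing. So the preimage count there is at least $k+1$, while farther out in the same complementary region of $im(S^1)$ it is $k$, and an orientation-preserving open map cannot remove preimages. What works is the following:
\begin{enumerate}
\item Replace the smoothed local sheets by sectors with corners at the crossing.
\item Cut the two sheets over the top quadrant along a short slit from the crossing and cross-glue them.
\item Place a simple branch point at the end of the slit.
\end{enumerate}
This is the honest form of the ``band with a branch point'', and it lowers $\chi$ by one.

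Finally, the example in Step~3 is wrong. A coorientation of a figure-eight points into one lobe and out of the other, so the relative winding numbers are $+1$ and $-1$, not $1$ and $1$. After normalizing on $\bCP^1$ the curve is a lima\c{c}on with an inner loop, which bounds a disk with one simple branch point. So it does not show that $\Si$ can fail to be a disk, as your own hedge concedes. The paper supports that sentence only by remarking that its construction can produce positive genus.
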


\begin{proof}
Orient the immersed circle so that the chosen coorientation is the positive boundary coorientation. Smooth every transverse self-intersection according to this orientation. One obtains a finite disjoint family of embedded oriented circles in $\bCP^1$, the Seifert circles of the immersion. Each Seifert circle bounds a uniquely determined closed disk lying on its positive side; let $D_1,\dots,D_m$ be these disks.

Start with the disjoint union of the disks $D_i$. For every original double point of $im(S^1)$, the smoothing separates the two incoming branches into two boundary arcs belonging to the boundaries of two disks, possibly the same one. Attach a narrow oriented band joining these two arcs exactly as in the usual Seifert-surface construction. After all bands are attached, one obtains a compact connected oriented surface $\Si$ with a single boundary component, naturally identified with the original immersed circle.

The boundary map extends over each disk $D_i$ by the identity map onto its image in $\bCP^1$. Each band is mapped into a small neighborhood of the corresponding double point so that, away from its midpoint, the map is a local homeomorphism onto one of the two branches, while at the midpoint the two sheets come together with the standard local model of a simple branch point. In local complex coordinates this model is equivalent to $z\mapsto z^2$. Choosing the bands pairwise disjoint and sufficiently small, these local extensions patch together to a continuous map
\[
F:\Si\to \bCP^1
\]
which is open and discrete, is a local homeomorphism away from finitely many interior points, and has only simple branching at those points. Hence $F$ is a topological branched covering extending the given boundary immersion.

The surface $\Si$ need not be a disk: already for immersed circles with sufficiently many crossings, the Seifert construction may produce positive genus.
\end{proof}

\begin{proof}[Proof of Proposition~\ref{lm1}]
Given an immersed graph $im:\Theta\to \bCP^1$, endow $\Theta$ with the ribbon structure induced from the orientation of $\bCP^1$. Let $N(\Theta)$ be the corresponding ribbon surface with boundary. By construction, the immersion of $\Theta$ extends to a map from a neighborhood of $\Theta$ in $N(\Theta)$ that is a local homeomorphism.

Each boundary component $C\subset \partial N(\Theta)$ maps to an immersed circle in $\bCP^1$ and inherits a natural coorientation from the surface $N(\Theta)$. By Lemma~\ref{lm:fill}, for each such component there exists a compact connected oriented surface $\Si_C$ with one boundary component and a topological branched covering
\[
F_C:\Si_C\to \bCP^1
\]
extending the boundary immersion.

Glue all surfaces $\Si_C$ to $N(\Theta)$ along the corresponding boundary components. The local models along the gluing circles agree by construction, so the resulting map
\[
\phi:Y\to \bCP^1
\]
on the closed oriented surface $Y$ is open and discrete and is locally equivalent either to a homeomorphism or to $z\mapsto z^k$ at finitely many points. Thus $\phi$ is a topological branched covering. By Sto\"{\i}low's theorem, there exists a unique complex structure on $Y$ for which $\phi$ becomes meromorphic. The embedded copy of $\Theta$ inside $Y$ is a ribbon graph whose image under $\phi$ is exactly $im(\Theta)$.
\end{proof}

\begin{problem}
Given an immersed graph $im:\Theta\to \bCP^1$, describe all other immersions of $\Theta$ into $\bCP^1$ inducing the same ribbon structure. The same question can be asked for a fixed abstract ribbon graph.
\end{problem}

It seems plausible that all such immersions can be related by Reidemeister moves together with creation and annihilation of loops, but we do not pursue this here.

\begin{problem}
Given an immersed graph, how does one compute the degree of the corresponding covering? Is it true that the degree determines the number of branch points?
\end{problem}

\begin{remark}
The degree does not determine the number of branch points. Indeed, Riemann--Hurwitz fixes the total ramification
\[
\sum_{p\in Y}(e_p-1)=2\deg\phi+2g(Y)-2,
\]
but does not determine how this ramification is distributed among distinct branch values.
\end{remark}

\begin{proposition}\label{pr:degbranch}
The degree of the branched covering associated with an immersed graph does not determine the number of branch points.
\end{proposition}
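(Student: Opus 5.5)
The plan is to exhibit two immersed graphs whose associated branched coverings (built as in Proposition~\ref{lm1}) have the same degree but different numbers of branch points. The cleanest route is to separate the two quantities. By Riemann--Hurwitz (as in the preceding Remark), once the degree $d$ and genus $g(Y)$ are fixed, the total ramification $\sum(e_p-1)=2d+2g(Y)-2$ is fixed. The number of distinct branch points, however, can range from $1$ (when all ramification is concentrated) to $2d+2g-2$ (when all ramification is simple). So it suffices to produce, for a fixed degree, two examples from immersed graphs that realize different ramification profiles. Alternatively, one can use two examples with the same degree but different genus, since then even the total ramification differs.

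Concretely, I would first take the simplest degree-one example: an embedded graph $\Theta\subset\bCP^1$, for instance a theta graph. Then $N(\Theta)$ capped by disks is the sphere, and Lemma~\ref{lm:fill} applied to embedded boundary circles returns disks mapped homeomorphically. Hence $\phi$ is a homeomorphism, with degree $1$ and no branch points. This is only a calibration step. The real comparison is in degree $2$.

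In degree $2$, I would take two immersed graphs $\Theta_1,\Theta_2$ for which the construction of Proposition~\ref{lm1} yields double covers of different genera. A double cover of genus $h$ has exactly $2h+2$ simple branch points, since every ramification point of a degree-$2$ map is simple. So a degree-$2$ example of genus $0$ (two branch points) and one of genus $1$ (four branch points) settle the claim.

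\begin{itemize}
\item For the genus-$0$ example, I would use an immersed circle with a single kink, where Lemma~\ref{lm:fill} inserts one band with one simple branch point, together with a graph structure added by attaching small embedded theta pieces to avoid valence-$2$ vertices.
\item For the genus-$1$ example, I would use a graph whose ribbon surface is a punctured torus.
\end{itemize}

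In each case I would then track the degree by counting preimages of a generic point, that is, by summing the local sheet counts over the Seifert disks and the ribbon.

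The main obstacle is that the construction in Proposition~\ref{lm1} is not canonical: the degree depends on the Seifert disks chosen in Lemma~\ref{lm:fill}, so the phrase ``the degree of the branched covering associated with an immersed graph'' must be read for the specific construction given. I would handle this by computing the degree directly from the Seifert data. The degree over a point $x$ equals the number of Seifert disks containing $x$, plus the ribbon sheets over $x$, and this number must come out constant. I would check this explicitly in the two small examples rather than in general.

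If matching degrees exactly proves fiddly, the fallback is the purely Riemann--Hurwitz argument of the Remark. Invoke Proposition~\ref{lm1} together with its converse direction: any branched covering of given degree $d\ge 3$ with prescribed branch data arises from some immersed graph, namely $\phi(\Ga)$ for a generic spanning $\Ga$ as in Lemma~\ref{lm:generic}. Then choose two degree-$3$ coverings of the sphere by the sphere, one with branch profile $\{(3),(3)\}$ (two branch points) and one with profile $\{(2,1)^4\}$ (four branch points).
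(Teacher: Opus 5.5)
Your main route is genuinely different from the paper's, and when carried out it works. The paper argues purely from Riemann--Hurwitz: degree and genus fix the total ramification $2d+2g(Y)-2$, which may be spread over more or fewer branch values. You instead fix $d=2$ and vary $g(Y)$.

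For the covering actually produced by Proposition~\ref{lm1}, this is the right choice. The Seifert filling of Lemma~\ref{lm:fill} is canonical and puts exactly one simple branch point on each band, i.e.\ one per self-crossing of a boundary curve $C$ of $N(\Theta)$. So all branching is simple and the number of branch points is $R=\sum_C c_C=2d+2g(Y)-2$. Within this construction the paper's redistribution mechanism cannot occur, and the statement amounts to ``the degree does not determine $g(Y)$''.

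The degree check you postpone can be done uniformly. Let $s_C$ be the number of Seifert circles of $C$. Comparing $\chi(Y)=V-E+\sum_C(s_C-c_C)$ with $\chi(Y)=2d-R$ gives $d=\tfrac12\bigl(V-E+\sum_C s_C\bigr)$. Your two examples then check out:
\begin{itemize}
\item A planar theta graph with a small curl on one edge has boundary curves with $(c_C,s_C)=(1,2),(1,2),(0,1)$. So $d=2$, $R=2$, $g(Y)=0$. Note there are two bands here, not one, since both push-offs of the curled edge acquire a kink.
\item The bouquet of two loops with interlaced half-edges, immersed with a single crossing, has one boundary curve with $4$ self-crossings near that crossing and $5$ Seifert circles. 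So $d=2$, $R=4$, $\Si_C$ is a disk and $g(Y)=1$.
\end{itemize}
``A graph whose ribbon surface is a punctured torus'' would not suffice on its own: you need $d=2$ for the specific immersion.

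Your fallback is where you reproduce the paper's reasoning: two degree-$3$ genus-$0$ coverings with the same total ramification $4$. As stated, it fails. There is no converse of the kind you invoke. The covering that Proposition~\ref{lm1} attaches to $\phi(\Ga)$ is assembled from Seifert fillings of the boundary curves, not from the disks of $Y\setminus N(\Ga)$, so in general it is not $\phi$. For instance, if $\Ga$ is a theta graph inside a disk on which $z^3$ is injective, the associated covering has degree $1$. Moreover, that construction only yields simple branch points, so the profile $\{(3),(3)\}$ can never occur. The fallback, like the paper's argument, is valid only if ``the covering associated with an immersed graph'' means the given $\phi$ of the direct set-up. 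A minor point: a nontrivial covering of the sphere has at least two branch values, so the count of branch values cannot go down to $1$.
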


\begin{proof}
This is an immediate consequence of Riemann--Hurwitz. The degree and genus determine only the total ramification
\[
\sum_{p\in Y}(e_p-1)=2\deg\phi+2g(Y)-2,
\]
and not the number of distinct branch values in $\bCP^1$. The same total ramification may be distributed among many simple branch values or concentrated over fewer branch values.
\end{proof}

\begin{problem}
Given an immersed graph, how does one describe all restrictions on the location of the branch points of the corresponding covering?
\end{problem}

\begin{proposition}\label{pr:branchregion}
Let $\widetilde\Ga$ be a generic representative of a ribbon graph class and let $U$ be a connected component of
\[
\bCP^1\setminus \phi(\widetilde\Ga).
\]
Choose a closed disk $\overline U\subset \bCP^1$ with interior $U$ and boundary equal to the topological boundary of $U$. For each connected component $S$ of $\phi^{-1}(\overline U)$, the restriction
\[
\phi|_S:S\to \overline U
\]
is a branched covering of compact oriented surfaces with boundary and satisfies
\[
\chi(S)=d_S-\sum_{p\in \operatorname{int}(S)}(e_p-1),
\]
where $d_S=\deg(\phi|_S)$. In particular, if $S$ is a disk and the boundary map has degree $1$, then $\phi|_S$ is unbranched, so $U$ contains no branch values coming from $S$.
\end{proposition}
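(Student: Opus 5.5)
The plan is to apply the Riemann--Hurwitz formula to the restricted map over a disk. First I will check that $\phi|_S:S\to\overline U$ really is a branched covering of compact surfaces with boundary. Since $\overline U$ is a closed disk whose boundary lies in $\phi(\widetilde\Ga)$, genericity (condition (i)) guarantees that $\partial\overline U$ contains no critical values of $\phi$. So over a small collar of $\partial\overline U$ the map $\phi$ is an honest unbranched covering, and $\phi^{-1}(\overline U)$ is a compact subsurface of $Y$ whose boundary is a finite union of circles mapping by coverings onto $\partial\overline U$.

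A component $S$ of $\phi^{-1}(\overline U)$ is open and closed in $\phi^{-1}(\overline U)$. Hence $\phi|_S$ is proper, open and discrete onto $\overline U$ (it is surjective, being proper and open onto a connected set), with a well-defined degree $d_S$. Orientation comes from $Y$. I will note that if $\overline U$ is not literally a closed disk (for example, if $\partial U$ is not a Jordan curve), I will instead take $\overline U$ to be a slightly shrunk closed disk inside $U$ that contains all critical values in $U$. This does not change the branching count, and it is presumably what the hypothesis ``choose a closed disk'' intends.

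The key step is the Euler characteristic count, done by the standard triangulation argument. Triangulate $\overline U$ so that every critical value of $\phi|_S$ in $\operatorname{int}\overline U$ is a vertex. Lift the triangulation to $S$ via $\phi|_S$. Each open cell other than a branch vertex has exactly $d_S$ preimages, and a vertex $x$ has $d_S-\sum_{p\in\phi^{-1}(x)\cap S}(e_p-1)$ preimages. Summing gives
\[
\chi(S)=d_S\,\chi(\overline U)-\sum_{p\in\operatorname{int}(S)}(e_p-1),
\]
and $\chi(\overline U)=1$ yields the formula. There is no ramification on $\partial S$, so the sum runs over interior points only.

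For the final claim, if $S$ is a disk then $\chi(S)=1$, and $\partial S$ is a single circle covering $\partial\overline U$ with degree $d_S$. So boundary degree $1$ forces $d_S=1$, and the formula then gives $\sum(e_p-1)=0$. Thus $\phi|_S$ is unbranched, indeed a homeomorphism, and no branch values in $U$ come from $S$.

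I expect the main obstacle to be the first step: making sure that $\phi^{-1}(\overline U)$ is a genuine surface with boundary and that $\phi|_S$ restricts to a covering on $\partial S$. This uses genericity and, if needed, the shrinking trick above. The counting itself is routine.
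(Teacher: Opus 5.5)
Your proposal is correct and follows the same route as the paper. Genericity keeps critical values off $\partial\overline U$, so each component $S$ is a branched cover of the disk with all ramification in the interior. Riemann--Hurwitz with $\chi(\overline U)=1$ then gives the formula, and a disk with degree-$1$ boundary map forces $d_S=1$ and zero ramification.

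You supply more detail than the paper: the triangulation proof of Riemann--Hurwitz, and the check that $\partial S$ covers $\partial\overline U$ with exactly $d_S$ sheets. Your shrinking remark for non-Jordan boundaries is a harmless aside that the stated hypothesis on $\overline U$ makes unnecessary.
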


\begin{proof}
Because $\widetilde\Ga$ is generic, the boundary of $U$ contains no critical values of $\phi$. Hence every connected component $S$ of $\phi^{-1}(\overline U)$ is a compact oriented surface with boundary, and the restriction $\phi|_S$ is a branched covering of degree $d_S$, all of whose branch points lie in $\operatorname{int}(S)$.

Applying the Riemann--Hurwitz formula for branched coverings of compact surfaces with boundary gives
\[
\chi(S)=d_S\chi(\overline U)-\sum_{p\in \operatorname{int}(S)}(e_p-1)=d_S-\sum_{p\in \operatorname{int}(S)}(e_p-1),
\]
since $\chi(\overline U)=1$.

If $S$ is a disk and the boundary map has degree $1$, then necessarily $d_S=1$. The above formula then gives
\[
1=1-\sum_{p\in \operatorname{int}(S)}(e_p-1),
\]
so the ramification sum is zero. Therefore $\phi|_S$ is unbranched, and $U$ contains no branch values contributed by $S$.
\end{proof}

For planar trivalent ribbon graphs there is another way to encode their ribbon structure in $\bCP^1$ besides using immersed images. This may substantially reduce the number of intersections of edges. We call an immersed graph \emph{2-coloured} if its vertices are coloured black and white. The ribbon structure induced by a 2-coloured trivalent immersed graph is defined by taking the clockwise cyclic order at white vertices and the counterclockwise cyclic order at black vertices.

\begin{problem}
Given a trivalent ribbon graph $\Ga\subset Y$ of genus $g$, find lower bounds on the number of intersections of edges if $\Ga$ is realized in $\bCP^1$ as a 2-coloured immersed graph.
\end{problem}

\begin{proposition}\label{pr:2colorbound}
Let $\Ga$ be a trivalent ribbon graph of genus $g(\Ga)$. Any 2-coloured immersed realization of its underlying abstract graph in $\bCP^1$ inducing the given ribbon structure has at least $g(\Ga)$ transverse edge intersections. More generally, if $G$ is the underlying abstract graph, then the number $d$ of transverse edge intersections satisfies
\[
d\ge \max\{g(\Ga),\crs(G)\}.
\]
\end{proposition}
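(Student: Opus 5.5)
The bound splits into two independent inequalities, $d\ge \crs(G)$ and $d\ge g(\Ga)$. I would prove them separately, with the second modelled on the proof of Proposition~\ref{pr:im}(i).

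The crossing-number part is immediate. A 2-coloured immersed realization is, after forgetting colours, an immersion of the abstract graph $G$ into $\bCP^1$ whose only singularities are transverse double points. By the definition of $\crs(G)$, as in the proof of Proposition~\ref{pr:crossing}, it therefore has at least $\crs(G)$ crossings. If some edge meets itself, I would first remove such self-crossings by the local isotopy of Lemma~\ref{lm:noselfint}. This only lowers $d$, so it suffices to prove the genus bound for realizations without edge self-intersections.

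For $d\ge g(\Ga)$, the idea is to read off the ribbon surface of $\Ga$ from a planar thickening. Let $\Theta\subset\bCP^1$ be the immersed realization. Replace each of the $d$ double points by a $4$-valent vertex, giving an embedded planar graph $H$. Its thickening $N(H)\subset\bCP^1$ caps off to a sphere. I would then construct a ribbon surface $N$ for $\Ga$ from $N(H)$ by the following local modifications, checking that each changes the genus by a controlled amount.

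\emph{(a) Restoring crossings.} At each $4$-valent vertex, pass from the disk neighbourhood back to two crossing bands. As in Proposition~\ref{pr:im}(i), this is the attachment of one $1$-handle and raises the capped genus by at most $1$ per crossing, for a total of at most $d$.

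\emph{(b) Black vertices.} These already carry the counterclockwise order induced from the sphere, so nothing is changed there.

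\emph{(c) White vertices.} Reversing the cyclic order at a trivalent vertex is realized by giving the vertex disk a reflection, which turns the incident bands through half-twists. The key observation is that a trivalent vertex has only two cyclic orders, and the two are mirror images. One natural device is to re-embed the thickened structure locally using the other side of the sphere: the surface near the realization is then the thickening with the opposite sheet orientation at white vertices. Since an edge joining a white vertex to a black vertex acquires a twist at only one end, the fix is to attach the edge band on the opposite face, that is, to give it a half-twist pair.

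The main obstacle is controlling the genus contribution of these colour changes so that the total stays at most $d$. What I would try first is a global argument. Every orientable surface obtained by the above gluing has genus bounded by $(E-V-F+2)/2$, where $F$ counts boundary components, and $F$ is unchanged under a simultaneous reversal at all white vertices when it is paired with reflecting the white parts. I would therefore compute the boundary components (faces) of the 2-coloured ribbon structure directly by tracing face walks along $\Theta$. At a crossing a face walk can switch sheets, and I expect each crossing to change the face count by at most $2$, which via Euler's formula gives $g(\Ga)\le d$. The remaining check is that passing through a vertex of either colour follows a local corner of the planar picture, possibly the mirror one, and so introduces no extra defect. Verifying this face-count comparison carefully at white vertices is the step I expect to be hardest.
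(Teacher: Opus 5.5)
You have the half $d\ge\crs(G)$ right, but the half $d\ge g(\Gamma)$ cannot be proved because it is false. It breaks exactly at the step you flagged as hardest.

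At a white vertex a face walk turns into the mirror corner; equivalently, the incident bands acquire half-twists. This changes the face count with no crossing present. Replacing the rotation $(a\,b\,c)$ at a trivalent vertex by $(a\,c\,b)$ multiplies $\sigma$ by a $3$-cycle, so $F_\sigma$ can drop by $2$ and the genus can rise by $1$. Your claim that passing through a vertex of either colour ``introduces no extra defect'' is therefore wrong.

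A concrete counterexample is the theta graph: vertices $u,v$ joined by three edges, embedded in $\bCP^1$ in the standard way, with $u$ black and $v$ white. Then $d=0=\crs(G)$. However, the induced rotations at $u$ and $v$ list the three edges in the same cyclic order, so $\sigma\alpha$ is a single $6$-cycle. Hence $F_\sigma=1$ and $g(\Gamma)=1-\tfrac{2-3+1}{2}=1$, in agreement with Proposition~\ref{pr:bwgenus}. A second example is $K_4$ drawn as a triangle with a white centre and black corners, which gives $F_\sigma=2$ and $g=1$. In fact the paper's own Remark, that every planar trivalent ribbon graph is realized by an \emph{embedded} 2-coloured graph, already contradicts the proposition for any such ribbon graph of positive genus.

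The paper's proof has the same gap. After restoring the $d$ crossings, one recovers the ribbon structure with counterclockwise order at \emph{every} vertex, i.e.\ the structure of the uncoloured immersion as in Proposition~\ref{pr:im}(i), not the 2-coloured structure. So that argument only bounds the genus of the all-counterclockwise structure by $d$.

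Your steps (a)--(c) can actually deliver the following. Combine the bound of $d$ on the all-counterclockwise genus with the per-vertex estimate above. Then also compare $\Gamma$ with the all-clockwise structure, which is the mirror image, has the same genus, and differs from $\Gamma$ only at black vertices. This gives $g(\Gamma)\le d+\min\{W,B\}$, where $W$ and $B$ are the numbers of white and black vertices. The theta graph shows the extra term cannot be removed.
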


\begin{proof}
Replace each transverse intersection of edges by a new $4$-valent vertex. This produces an embedded graph $H\subset \bCP^1$, and the ribbon surface associated with $H$ has genus $0$. Undoing one such replacement restores a crossing and can increase the genus of the thickened surface by at most $1$. After all $d$ crossings are restored, one recovers the original ribbon graph $\Ga$, hence
\[
g(\Ga)\le d.
\]
On the other hand, any immersed realization of the underlying abstract graph $G$ in the sphere has at least $\crs(G)$ double points by definition of the planar crossing number. Combining the two bounds yields the claim.
\end{proof}

\begin{remark}
Any planar trivalent ribbon graph can be realized as an embedded 2-coloured graph.
\end{remark}

Now let $\ga\subset \bCP^1$ be an embedded trivalent planar graph with black and white vertices. To such a graph we associate a ribbon graph $\widehat \ga$ by taking the clockwise cyclic order at white vertices and the counterclockwise cyclic order at black vertices. More generally, for any trivalent ribbon graph one may ask for criteria guaranteeing uniqueness of an embedding into a surface of minimal genus.

Given a planar ribbon graph $\Ga$ and an embedding $em:\Ga\hookrightarrow \bCP^1$, we get two cyclic orders at the vertices of $\Ga$: the original one and the one induced by the planar embedding. By the \emph{defect} $\operatorname{def}(\Ga,em)$ of the planar embedding we mean the number of vertices at which these cyclic orders differ.

\begin{problem}
How does one find embeddings $\Ga\hookrightarrow \bCP^1$ with minimal defect?
\end{problem}

\begin{proposition}\label{pr:defect3conn}
Let $\Ga$ be a planar $3$-connected graph with a fixed ribbon structure. Then the minimal defect of a planar embedding equals
\[
\min\{\delta_+(\Ga),\delta_-(\Ga)\},
\]
where $\delta_+(\Ga)$ and $\delta_-(\Ga)$ are the numbers of vertices at which the prescribed cyclic order disagrees with, respectively, one planar embedding and its mirror image.
\end{proposition}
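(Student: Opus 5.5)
The plan is to reduce everything to Whitney's uniqueness theorem. A $3$-connected planar graph $\Ga$ has, up to homeomorphism of $\bCP^1$, essentially one embedding in the sphere. More precisely, the rotation systems, i.e.\ cyclic orders at the vertices, realized by planar embeddings are exactly two: some $\rho$ and its reverse $\rho^{-1}$. The reverse $\rho^{-1}$ is obtained from $\rho$ by composing with an orientation-reversing homeomorphism, the ``mirror image''.

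First I will recall this theorem and justify it. The faces of any planar embedding of a $3$-connected graph are exactly its induced nonseparating cycles (Tutte). Hence the facial walks are determined by the abstract graph. The facial walks in turn determine the rotation at each vertex up to a global choice of orientation. For this step I would use that $\Ga$ is connected and that the local orientations at adjacent vertices are forced to be coherent along shared faces.

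Next, for any planar embedding $em$, the defect $\operatorname{def}(\Ga,em)$ depends only on the rotation system induced by $em$. By the previous step this rotation is either $\rho$ or $\rho^{-1}$. By the definition of $\delta_\pm(\Ga)$, the defect is therefore $\delta_+(\Ga)$ in the first case and $\delta_-(\Ga)$ in the second. Both values are attained: take a fixed embedding, and compose it with a reflection of $\bCP^1$. The minimum over all embeddings is thus $\min\{\delta_+(\Ga),\delta_-(\Ga)\}$.

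One subtlety needs checking. At a vertex of valence $\le 2$, the cyclic orders $\rho_v$ and $\rho_v^{-1}$ coincide, but $3$-connectivity rules such vertices out, apart from tiny exceptions like $K_1$, $K_2$, $K_3$, where the statement is trivial. The one genuine input is Whitney's theorem. Its rigorous justification, that the faces are exactly the nonseparating induced cycles and that this pins down the rotations, is the main obstacle. I would cite it as a standard result, e.g.\ from Mohar--Thomassen, rather than reprove it.
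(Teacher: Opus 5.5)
Your proposal is correct and follows the same route as the paper: Whitney's theorem leaves only the two planar rotation systems $\rho$ and $\rho^{-1}$, so the defect of any planar embedding is either $\delta_+(\Ga)$ or $\delta_-(\Ga)$, and the minimum is the smaller of the two. Your additional remarks (Tutte's characterization of faces, both values being attained via a reflection of $\bCP^1$, and the valence check) only make explicit what the paper leaves implicit; note that $K_1,K_2,K_3$ are not $3$-connected under the usual convention, so that check has no exceptions at all.
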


\begin{proof}
By Whitney's theorem, a $3$-connected planar graph has exactly two planar embeddings up to orientation-preserving homeomorphism of the sphere, namely one embedding and its mirror image. Thus there are only two planar rotation systems to compare with the prescribed ribbon structure. At each vertex the contribution to the defect is $0$ or $1$ depending on whether the cyclic orders agree. Therefore the smallest possible defect is exactly the smaller of the two disagreement counts.
\end{proof}

Recall that all planar embeddings of a given planar graph are constrained by Whitney-type results; in particular, each 3-connected planar graph has a unique embedding up to mirror symmetry.

For trivalent planar graphs the latter problem is especially natural. A trivalent vertex has only two cyclic orientations, which we call black and white.

\begin{problem}\label{pr:genusbw}
Given an embedded planar graph $em:\Ga\to \bCP^1$ with black and white vertices, what is the genus of the oriented surface carrying the associated ribbon graph?
\end{problem}

The answer is determined by the induced rotation system.

\begin{proposition}\label{pr:bwgenus}
Let $em:\Ga\hookrightarrow \bCP^1$ be an embedded planar trivalent graph with black and white vertices. Let $\sigma$ be the permutation of half-edges obtained by taking the counterclockwise cyclic order at black vertices and the clockwise cyclic order at white vertices, and let $\alpha$ be the fixed-point-free involution pairing the two half-edges of each edge. If $F_\sigma$ denotes the number of cycles of the permutation $\sigma\alpha$, then the genus of the associated ribbon graph is
\[
g(\Ga)=1-\frac{V-E+F_\sigma}{2}.
\]
For a trivalent graph one has $E=\frac{3V}{2}$, hence
\[
g(\Ga)=1+\frac{V}{4}-\frac{F_\sigma}{2}.
\]
\end{proposition}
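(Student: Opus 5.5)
The plan is to treat the 2-coloured embedded graph purely combinatorially, as a rotation system. By the standard dictionary between ribbon graphs and combinatorial maps, a ribbon structure on a connected multigraph is the same thing as a pair $(\sigma,\alpha)$. Here $\sigma$ is a permutation of the set $D$ of half-edges (darts), whose cycles are the cyclic orders at the vertices. The map $\alpha$ is the fixed-point-free involution whose cycles are the edges. For $\widehat\ga$ the cycles of $\sigma$ are, by definition, the counterclockwise orders at black vertices and the clockwise orders at white vertices read off from $em$. So the first step is simply to record that $\sigma$ has $V$ cycles, $\alpha$ has $E$ cycles, and $|D|=2E$.

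The second step identifies the boundary components of the thickened surface $N(\widehat\ga)$ with the cycles of $\sigma\alpha$. Traverse the boundary of the ribbon: arriving along an edge at a half-edge $h$, we pass to its partner $\alpha(h)$ at the other end, then turn to the next half-edge $\sigma(\alpha(h))$ in the cyclic order there. Hence boundary components correspond bijectively to the orbits of $\sigma\alpha$, up to the convention of composing on the left or right, which only conjugates the permutation and does not change the cycle count. Capping each boundary component by a disk gives the closed surface $Y_{\widehat\ga}$ with a CW decomposition: $V$ vertices, $E$ edges and $F_\sigma$ faces. This step needs $\Ga$ to be connected, which I will assume, as is implicit in the statement. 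Otherwise one gets one formula per component.

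The third step is Euler's formula. Since $Y_{\widehat\ga}$ is a closed orientable surface cellularly decomposed by the graph, $V-E+F_\sigma=2-2g$, which rearranges to $g=1-\tfrac{V-E+F_\sigma}{2}$. Substituting the handshake identity $3V=2E$, valid for trivalent graphs, gives $V-E=-V/2$. Then
\[
g(\Ga)=1+\frac{V}{4}-\frac{F_\sigma}{2}.
\]

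The only genuinely delicate point is the second step. One must check that the face-tracing permutation is $\sigma\alpha$ for the ribbon structure in which the orientation has been reversed at white vertices. In particular, the thickening must still yield an orientable surface, with the twisting behaving correctly at black–white edges. I would handle this by noting that a ribbon structure in the sense of the paper's Definition is abstract: the thickening is built from the rotation system alone, with every vertex disk glued orientably. The planar embedding $em$ is used only to read off $\sigma$, so no twisting issue arises. Once that is accepted, the rest is routine bookkeeping.
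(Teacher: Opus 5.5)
Your proposal is correct and follows the same route as the paper. You identify the cycles of $\sigma\alpha$ with the boundary components of the thickening, apply Euler's formula $V-E+F_\sigma=2-2g$ to the capped surface, and substitute $E=3V/2$. Your explicit connectedness assumption, and your remark that reversing the order at white vertices causes no orientability issue (the thickening is built from the rotation system alone), are points the paper leaves implicit, and you handle both correctly.
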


\begin{proof}
The permutation $\sigma\alpha$ is the face permutation of the ribbon thickening determined by the chosen black/white cyclic orders, so the number of its cycles is exactly the number $F_\sigma$ of boundary components of that thickening. The Euler characteristic of the corresponding closed oriented surface is therefore
\[
\chi=V-E+F_\sigma,
\]
and the genus is given by
\[
g(\Ga)=1-\frac{\chi}{2}=1-\frac{V-E+F_\sigma}{2}.
\]
Substituting $E=\frac{3V}{2}$ for a trivalent graph yields the second formula.
\end{proof}

\medskip
We conclude this section with a family of related questions inspired by local reflections of black-and-white graphs. These problems appear interesting, but unlike the statements above, they remain speculative.

\begin{problem}
What is the effect on the associated branched covering of reflecting a topological disk containing only white vertices of a 2-coloured planar graph and reglueing it so that those vertices become black? Can one estimate the resulting increase in the number of branch points in terms of the number of cut edges?
\end{problem}

\begin{problem}
Is the number of branch points independent of the order in which one performs such disk-reflection operations?
\end{problem}

\begin{problem}
For which planar graphs with black and white vertices do the branch points obtained from an intrinsic branched covering construction coincide with those predicted by a sequence of disk-reflection operations? We call such graphs \emph{minimal}.
\end{problem}

\section{Final remarks}

The preceding discussion shows that the direct and inverse constructions are robust, but also that the naive expectation $\sharp([\Ga],\phi)=g(Y)$ is too optimistic without further restrictions. Several of the open questions admit partial answers: the degree does not determine the number of branch points; the genus of a black-and-white planar trivalent graph is computed by its rotation system; the defect minimization problem is completely solved for $3$-connected planar graphs; and any $2$-coloured immersed realization must have at least $\max\{g(\Ga),\crs(G)\}$ transverse edge intersections. The natural next step seems to be the planar underlying-graph case, where the lower bound coming from the planar crossing number disappears and equality with the genus becomes plausible.

\medskip\noindent
\textbf{Acknowledgements.}
The author is sincerely grateful to Professor A.~Gabrielov of Purdue University for explaining  his experimental observations related to this topic about a decennium ago, see \cite{ErGa}.


\begin{thebibliography}{30}

\bibitem{ChGa} Zhi Yun Cheng, Hong Zhu Gao, Mutation on Knots and Whitney's 2-Isomorphism Theorem, \emph{Acta Mathematica Sinica, English Series} \textbf{29} (2013), no.~6, 1219--1230.

\bibitem{ErGa} A.~Eremenko and A.~Gabrielov, Irreducibility of some spectral determinants, arXiv:0904.1714.

\bibitem{MuPe} Motohico Mulase, Michael Penkava, Ribbon Graphs, Quadratic Differentials on Riemann Surfaces, and Algebraic Curves Defined over $\overline{\mathbb Q}$, \emph{Asian Journal of Mathematics} \textbf{2} (1998), no.~4, 875--920.

\end{thebibliography}
\end{document}